\documentclass[journal,twoside,web]{ieeecolor}

\usepackage{generic}
\usepackage{cite}
\usepackage{amsmath,amssymb,amsfonts}
\usepackage{algorithmic}
\usepackage{graphicx}
\usepackage{textcomp}

\usepackage{graphicx}
\usepackage{epsfig} 
\usepackage{cite}
\usepackage{lcsys}

\usepackage{scalerel}
\usepackage{algorithm}

\newtheorem{theorem}{Theorem}[section]
\newtheorem{lemma}[theorem]{Lemma}
\newtheorem{proposition}[theorem]{Proposition}

\newtheorem{definition}[theorem]{Definition}
\newtheorem{remark}[theorem]{Remark}

\DeclareMathOperator*{\argmin}{arg\,min}

\newcommand{\rr}{{\mathbb R}}
\newcommand{\diag}{\mathop{\rm diag}\nolimits}

\newcommand*{\pdot}{\mathbin{\scalerel*{\boldsymbol\odot}{\circ}}}

\begin{document}

\def\BibTeX{{\rm B\kern-.05em{\sc i\kern-.025em b}\kern-.08em
    T\kern-.1667em\lower.7ex\hbox{E}\kern-.125emX}}
\markboth{\journalname, VOL. XX, NO. XX, XXXX 2017}
{Author \MakeLowercase{\textit{et al.}}: Preparation of Papers for IEEE Control Systems Letters (August 2022)}

\title{Fixed-time-stable ODE Representation of Lasso}

\author{
Liang Wu$^{a}$,
Yunhong Che$^{b}$,
Wallace Gian Yion Tan$^{b,*}$,
Efstathios Iliakis$^{b}$,\\
Richard D. Braatz$^{b}$, Fellow, IEEE,
and J\'an Drgo\v na$^{a}$, Member, IEEE
\thanks{This work was supported by the US DOE, Office of Science, ASCR program under the Scientific Discovery through Advanced Computing (SciDAC) Institute “LEADS: LEarning-Accelerated Domain Science”.}
\thanks{$^{a}$Department of Civil and Systems Engineering,
Johns Hopkins University, Baltimore, MD 21218 USA. $^{b}$Department of Chemical Engineering, Massachusetts Institute of Technology, Cambridge, MA 02139 USA. $^{*}$Wallace Gian Yion Tan was supported by the MathWorks Fellowship. $^{*}$Efstathios Iliakis was supported by the Onassis Foundation Scholarships' Program.Corresponding author: Liang Wu (\texttt{wliang14@jh.edu})}.
}

\maketitle
\thispagestyle{empty}

\begin{abstract}
Lasso problems arise in many areas, including signal processing, machine learning, and control, and are closely connected to sparse coding mechanisms observed in neuroscience. A continuous-time ordinary differential equation (ODE) representation of the Lasso problem not only enables its solution on analog computers but also provides a framework for interpreting neurophysiological phenomena. This article proposes a fixed-time-stable ODE representation of the Lasso problem by first transforming it into a smooth nonnegative quadratic program (QP) and then designing a projection-free Newton-based fixed-time-stable ODE system for solving the corresponding Karush–Kuhn–Tucker (KKT) conditions. Moreover, the settling time of the ODE is independent of the problem data and can be arbitrarily prescribed.
Numerical experiments verify that the trajectory reaches the optimal solution within the prescribed time.
\end{abstract}

\begin{IEEEkeywords}
Sparse optimization, Lasso, Ordinary differential equation, Fixed-time-stable.
\end{IEEEkeywords}

\section{Introduction}
\label{sec:introduction}

\IEEEPARstart{S}{parse} optimization plays a central role in modern signal processing, system identification, and machine learning. Among various formulations, the Lasso problem \cite{tibshirani1996regression}, which augments a least-squares objective with an $\ell_1$-regularization term, has become a cornerstone for promoting sparsity in high-dimensional estimation problems. Lasso problems also arise in model predictive control applications \cite{gallieri2012}.

Despite its conceptual simplicity, the Lasso problem is characterized by the non-smooth nature of the $\ell_1$ regularization, and a large body of literature has focused on designing efficient numerical optimization algorithms (running on digital computers), including proximal gradient methods, coordinate descent, and operator-splitting schemes. However, conventional numerical optimization can be energy-intensive and may scale unfavorably in practice as problem size grows. Moreover, it often lacks certified worst-case execution-time guarantees \cite{wu2025Arbitrarily,Wu2026n3QP}. To address these limitations, our previous work \cite{wu2025Arbitrarily} proposes a valuable concept: transforming convex optimization problems into fixed-time-stable ordinary differential equations (ODEs) and solving them on analog computers can achieve arbitrarily small computational time, independent of problem dimensions. Moreover, establishing the connection between optimization algorithms (or problems) and continuous-time dynamic systems has received increasing attention \cite{wibisono2016variational,boyd2024optimization}. 

This article develops a continuous-time ODE representation for the non-smooth Lasso problem, which is closely related to sparse coding mechanisms observed in biological neural systems. Both theoretical and experimental studies suggest that sparse coding plays an important role in neural information processing \cite{lewicki2002efficient}. In many sensory systems, stimuli are represented by the activity of only a small subset of neurons, leading to sparse population codes. Identifying physical analog architectures, such as ODE-based dynamical systems, that capture this sparse coding mechanism can be useful for understanding and emulating biological neural computation. Since the Lasso formulation is a widely used mathematical model for sparse optimization, deriving an ODE representation for solving the Lasso problem may help test theories that explain neurophysiological phenomena.

\subsection{Related work}
In \cite{rozell2008sparse}, the locally competitive algorithm (LCA) is introduced as a dynamical system-based approach for solving the Lasso problem. Its ODE states are interpreted in terms of membrane potentials and short-term firing rates in a neural population, making the model neurally plausible. The LCA dynamics involving the thresholding function can be realized either through Field Programmable Analog Array \cite{shapero2013configurable}, or through the brain-inspired IBM TrueNorth Neurosynaptic system \cite{fair2019sparse}. The convergence of LCA to a Lasso solution was established in \cite{tang2016convergence}; however, no quantitative analysis of the settling time was provided to determine when the solution can be retrieved.

In \cite{garg2022fixed}, a fixed-time-stable proximal dynamical system for mixed variational inequality problems (including Lasso) is proposed. In \cite{zhou2025fixed}, a fixed-time-stable neurodynamic method for fused Lasso is developed. However, these Lasso-to-ODE approaches \cite{rozell2008sparse,tang2016convergence,garg2022fixed,zhou2025fixed} rely on non-smooth thresholding or projection operators and belong to gradient flows, making the fixed-time stability in \cite{garg2022fixed,zhou2025fixed} dependent on problem data (modulus and Lipschitz constants); see \cite[Thm.~1]{garg2022fixed} and \cite[Thm.~2]{zhou2025fixed}. Our previous work \cite{wu2025Arbitrarily} studies fixed-time-stable ODE representations of general convex optimization via a homogeneous monotone complementarity formulation for infeasibility detection. This article demonstrates that the Lasso case admits a simpler, specialized framework to find its fixed-time-stable ODE representation.

\subsection{Contributions}
This article proposes a Newton-based fixed-time-stable ODE representation for the Lasso problem. The fixed-time stability property is independent of the problem data, allowing the settling time to be arbitrarily prescribed and tuned to small values, as discussed in \cite{wu2025Arbitrarily}. 

This property is achieved through \textit{i)} an equivalent transformation of the Lasso problem into a smooth nonnegative quadratic program (QP), and \textit{ii)} a Newton-based fixed-time-stable ODE for solving the resulting QP, which automatically enforces the non-negativity constraints and thereby avoids non-smooth projection operations.

\section{Fixed-time-stable ODE for Unconstrained Smooth Optimization}
Consider an unconstrained smooth optimization problem,
\begin{equation}\label{eqn_unconstrained_problem}
\min_{x\in\mathbb{R}^{n_x}} f(x),
\end{equation}
where the function $f: \mathbb{R}^{n_x}\rightarrow R$ is convex and differentiable. A point $x^*$ is the global optimal point of $f$ if and only if $\nabla f(x^*) = 0$  (see \cite{boyd2004convex}) and, if $f$ is strongly convex, then $x^*$ is unique. A straightforward approach to convert \eqref{eqn_unconstrained_problem} into an ODE is
\begin{equation}\label{eqn_continuous_dynamic}
    \dot{x} = -k \nabla f(x),
\end{equation}
where $k>0$ is the time-scale parameter. However, this method does not guarantee an upper bound on the settling time.

Before presenting our proposed Analog-flow with certified settling time for problem \eqref{eqn_unconstrained_problem}, we first present some preliminaries about the concept of \textit{finite-time-stable} and \textit{fixed-time-stable}.

\begin{definition}[\textbf{Finite-time-stable} \cite{polyakov2011nonlinear}] Autonomous system \eqref{eqn_continuous_dynamic} is said to be {\em globally finite-time-stable} if it is globally asymptotically Lyapunov stable and reaches the equilibrium $x^*$ at some finite time $\forall x_0\in\rr^{n_x}\backslash\{x^*\}$, where the equilibrium-time function $T(z_0)$ is bounded, i.e., for all $x_0\in\rr^{n_x}$, $\exists~T_{\max}(x_0)>0$ such that $T(x_0)\leq T_{\max}(x_0)$.
\end{definition}

\begin{lemma}[\!\!\cite{Garg2021fixed,polyakov2011nonlinear}] 
    If there exists a positive-definite Lyapunov function $V\in\mathcal{C}^1(\mathcal{D},\rr)$ (where $\mathcal{D} \subset \mathbb{R}^{n_x}$ is a neighborhood of the equilibrium $x^*$) satisfying
    \begin{equation}
        \dot{V}(x)\leq -kV(x)^\alpha, \ \forall x\in\mathcal{D} \backslash \{x^*\},
    \end{equation}
    where the parameters $k>0$ and $\alpha\in(0,1)$, then the settling time $T(x_0)$ for the system \eqref{eqn_continuous_dynamic} can be bounded by
    \vspace{-0.1cm}
\begin{equation}
\label{eqn_finite_time_stable_upper_bound}
        T(x_0)\leq T_{\max}(x_0)=\frac{V(x_0)^{1-\alpha}}{k(1-\alpha)}.
    \end{equation}
\end{lemma}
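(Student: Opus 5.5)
The plan is a comparison argument along a single trajectory, reducing everything to a scalar differential inequality. Fix $x_0\in\mathcal{D}\backslash\{x^*\}$ and let $x(t)$ be the solution of \eqref{eqn_continuous_dynamic} starting at $x_0$. Define $v(t)=V(x(t))$, which is $\mathcal{C}^1$ in $t$ because $V\in\mathcal{C}^1$ and $x(\cdot)$ is differentiable. The hypothesis gives $\dot v(t)\le -k\,v(t)^\alpha$ as long as $x(t)\in\mathcal{D}\backslash\{x^*\}$.

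\textbf{Step 1 (staying in $\mathcal{D}$).} First I would check that the trajectory remains in $\mathcal{D}$. Since $\dot v\le 0$, $V$ is nonincreasing along trajectories. If $\mathcal{D}$ is not forward invariant, I would shrink it to a sublevel set $\{V\le c\}$ contained in $\mathcal{D}$; standard Lyapunov arguments using positive definiteness show such a set is compact for small $c$. Such a sublevel set is forward invariant, so the inequality holds on the whole interval before the equilibrium is reached. I expect this to be the main technical point, since the lemma is stated locally; in the global application in the paper, $\mathcal{D}=\rr^{n_x}$ and the issue disappears.

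\textbf{Step 2 (integrate the inequality).} While $v(t)>0$, divide by $v^\alpha$ and use $\frac{d}{dt}v^{1-\alpha}=(1-\alpha)v^{-\alpha}\dot v$. This gives
\begin{equation*}
\frac{d}{dt}\, v(t)^{1-\alpha}\le -k(1-\alpha).
\end{equation*}
Integrating from $0$ to $t$ yields $v(t)^{1-\alpha}\le V(x_0)^{1-\alpha}-k(1-\alpha)t$. The right-hand side vanishes at $t=T_{\max}(x_0)=V(x_0)^{1-\alpha}/(k(1-\alpha))$. Because $v\ge 0$, $v$ must therefore reach $0$ at some time $T(x_0)\le T_{\max}(x_0)$; otherwise $v^{1-\alpha}$ would become negative.

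\textbf{Step 3 (conclude).} By positive definiteness, $V(x)=0$ only at $x=x^*$, so $x(T(x_0))=x^*$. Since $x^*$ is an equilibrium and $V$ cannot increase, $V$ stays at zero afterward, so the trajectory remains at $x^*$. Hence the settling time satisfies the bound \eqref{eqn_finite_time_stable_upper_bound}.
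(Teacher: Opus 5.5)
Your proof is correct. The paper gives no proof of this lemma---it is imported from the cited references---but your argument is the standard one. It is exactly the inequality version of what the paper does in the proof of Proposition~\ref{lemma_fixed_time_stable_simplified}: your $v^{1-\alpha}$ is the paper's substitution $W=V^{1/\mu}$ with $1-\alpha=1/\mu$. The paper solves the \emph{equality} ODE by separation of variables and reads off its hitting time, which tacitly relies on a comparison principle. Integrating $\frac{d}{dt}v^{1-\alpha}\le -k(1-\alpha)$ directly, as you do, makes that step unnecessary.

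Your Step~1 caveat is also right. Read literally for every $x_0\in\mathcal{D}$, the bound can fail, because a trajectory may leave $\mathcal{D}$, where nothing is assumed. The conclusion therefore holds only for $x_0$ in a forward-invariant neighborhood of $x^*$. To make that step precise, do not use the raw sublevel set $\{x\in\mathcal{D}:V(x)\le c\}$. It need not be compact, since positive definiteness allows $V\to 0$ toward $\partial\mathcal{D}$. Instead, choose a closed ball $\bar B_r(x^*)\subset\mathcal{D}$, take $0<c<\min_{\|x-x^*\|=r}V(x)$, and set $\Omega_c=\{x\in\bar B_r(x^*):V(x)\le c\}$. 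Because $V$ is nonincreasing along trajectories, a trajectory starting in $\Omega_c$ can never reach the sphere $\|x-x^*\|=r$, so $\Omega_c$ is compact and forward invariant. That compactness is also what guarantees the solution exists on all of $[0,T_{\max}(x_0)]$, which your Step~2 uses implicitly.
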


\vspace{0.15cm}

The drawback of the \textit{finite-time-stable} concept is that, by \eqref{eqn_finite_time_stable_upper_bound}, the upper bound for the settling time $T(x_0)$ is dependent on the initial state $x_0$ and increases without bound when the magnitude of the initial state $\|x_0\|$ increases. To make the settling time independent from the initial state $x_0$, the concept of \textit{fixed-time-stable} is introduced.
\begin{definition}[\textbf{Fixed-time-stable} \cite{polyakov2011nonlinear}] The system \eqref{eqn_continuous_dynamic} is said to be {\em fixed-time-stable} if it is globally finite-time-stable and its equilibrium-time function $T(x_0)$ is globally bounded, i.e., for all $x_0\in\rr^n$, $\exists  T_{\max}\in(0,\infty)$ such that $T(x_0)\leq T_{\max}$.
\end{definition}

\begin{lemma}[\!\!\cite{polyakov2011nonlinear,Garg2021fixed}]\label{lemma_fixed_time_stable}
 If there exists a positive-definite Lyapunov function $V\in\mathcal{C}^1(\mathcal{D},\rr)$ (where $\mathcal{D} \subset \mathbb{R}^n$ is a neighborhood of the equilibrium) satisfying
\begin{equation}\label{eqn_Lyapunov_function_fixed_time_stable}
        \dot{V}(x)\leq -k_1V(x)^{\alpha_1}-k_2V(x)^{\alpha_2}, \ \ \forall x\in\mathcal{D} \backslash \{x^*\},
    \end{equation}
    where the parameters $k_1>0$, $k_2>0$, $0<\alpha_1<1$, and $\alpha_2>1$, then the settling time $T(x_0)$ for the system \eqref{eqn_continuous_dynamic} can be globally bounded by

    \vspace{-0.3cm}
    \begin{equation}\label{eqn_T_max_1}
       T(x_0)\leq T_{\max}=\frac{1}{k_1(1-\alpha_1)}+\frac{1}{k_2(\alpha_2-1)}. 
    \end{equation}
\end{lemma}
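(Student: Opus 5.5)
The plan is a comparison argument for the scalar function $v(t):=V(x(t))$ along a trajectory. Fix $x_0\neq x^*$ and let $x(t)$ be the solution of \eqref{eqn_continuous_dynamic} with $x(0)=x_0$. As long as $x(t)\neq x^*$, hypothesis \eqref{eqn_Lyapunov_function_fixed_time_stable} gives the differential inequality
\begin{equation*}
\dot v \le -k_1 v^{\alpha_1} - k_2 v^{\alpha_2}.
\end{equation*}
Thus $v$ is strictly decreasing while positive. This also gives Lyapunov stability and attractivity, so only the uniform bound on the hitting time of $\{v=0\}$ remains. Here I take $\mathcal{D}=\rr^{n_x}$ (global statement); otherwise the same argument works for trajectories that stay in $\mathcal{D}$, which holds in a sublevel set because $v$ decreases.

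The hitting time is split into two phases according to whether $v\ge 1$ or $v<1$.

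\emph{Phase 1 ($v\ge 1$).} Drop the $k_1$ term, so that $\dot v\le -k_2 v^{\alpha_2}$. Since $\alpha_2>1$, separation of variables gives
\begin{equation*}
\frac{d}{dt}\,v^{1-\alpha_2} = (1-\alpha_2)\,v^{-\alpha_2}\,\dot v \ \ge\ k_2(\alpha_2-1).
\end{equation*}
Integrating from $0$ to $t$ yields
\begin{equation*}
v(t)^{1-\alpha_2} \ge v(0)^{1-\alpha_2} + k_2(\alpha_2-1)\,t \ \ge\ k_2(\alpha_2-1)\,t.
\end{equation*}
Hence $v$ reaches $1$ no later than $t_1\le 1/(k_2(\alpha_2-1))$. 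This bound holds uniformly in $v(0)$, even as $v(0)\to\infty$, and it is the key point.

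\emph{Phase 2 ($v<1$).} Drop the $k_2$ term, so that $\dot v\le -k_1 v^{\alpha_1}$. By the same computation,
\begin{equation*}
\frac{d}{dt}\,v^{1-\alpha_1} \le -k_1(1-\alpha_1).
\end{equation*}
Starting from $v\le 1$, the function $v$ therefore hits $0$ within time $1/(k_1(1-\alpha_1))$. This is exactly the finite-time lemma above applied with $V(x(t_1))\le 1$. Adding the two phases gives \eqref{eqn_T_max_1}. If $v(0)<1$, only Phase 2 occurs and the bound still holds.

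The main obstacle is rigor in the comparison step rather than the algebra. Three points need care. First, the power functions $v^{1-\alpha_i}$ must be differentiable along trajectories; they are, as long as $v>0$, because $V\in\mathcal{C}^1$. Second, solutions must exist up to the settling time and not escape $\mathcal{D}$; this follows because $V$ decreases, and I would invoke radial unboundedness or $\mathcal{D}=\rr^{n_x}$ for the global claim. Third, once $v$ hits $0$, the trajectory must stay at $x^*$. This follows from positive definiteness together with the fact that $\dot v\le 0$ keeps $v=0$. With these points settled, the result is the classical argument of \cite{polyakov2011nonlinear}.
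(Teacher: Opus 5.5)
Your proof is correct. The paper does not prove this lemma itself; it imports it from \cite{polyakov2011nonlinear}. Your argument is the classical one from that source: split the trajectory at the level set $\{V=1\}$. The $V^{\alpha_2}$ term brings any trajectory into $\{V\le 1\}$ within $1/(k_2(\alpha_2-1))$, uniformly in $x_0$. The $V^{\alpha_1}$ term then drives $V$ to zero within a further $1/(k_1(1-\alpha_1))$. Your three caveats are the right ones: radial unboundedness (or $\mathcal{D}=\rr^{n_x}$) for the global claim and to exclude finite escape, and showing the trajectory stays at $x^*$ once $V$ reaches zero.

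The closest related argument in the paper is the proof of Proposition \ref{lemma_fixed_time_stable_simplified}. That proof does not split; it integrates the comparison equation exactly in the special case $\alpha_{1,2}=1\mp 1/\mu$, which yields the sharper bound $\mu\pi/(2\sqrt{k_1k_2})$. The looseness of \eqref{eqn_T_max_1} comes precisely from discarding one of the two decay terms in each phase of your split.
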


    \vspace{0.2cm}

Now the upper bound $T_{\max}$ is independent of the initial point $x_0$, which makes it appealing for achieving the paradigm of arbitrarily small prescribed-time computing. Equation \eqref{eqn_T_max_1} for $T_{\max}$ is not tight for our later presented settings; this article proposes a more accurate estimate as shown in the following proposition.
\begin{proposition}\label{lemma_fixed_time_stable_simplified}
If there exists a Lyapunov function $V$ satisfying \eqref{eqn_Lyapunov_function_fixed_time_stable} with 
\begin{equation}\label{eqn_alpha_1_2}
\alpha_1=1-\frac{1}{\mu}, \qquad \alpha_2=1+\frac{1}{\mu},\qquad\mu>1,    
\end{equation}
then the settling time $T(x_0)$ for the system \eqref{eqn_continuous_dynamic} can be globally bounded by
\begin{equation}\label{eqn_prescribed_time}
         T(x_0)\leq T_{\max} = \frac{\mu\pi}{2\sqrt{k_1k_2}}.
    \end{equation}
Furthermore, the upper bound of the settling time in \eqref{eqn_prescribed_time} is less than \eqref{eqn_T_max_1}.    
\end{proposition}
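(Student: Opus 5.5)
We compare the trajectory of $V$ with the solution of the scalar comparison ODE and integrate that ODE explicitly. Along a trajectory $x(t)$, set $v(t)=V(x(t))$. Then \eqref{eqn_Lyapunov_function_fixed_time_stable} with \eqref{eqn_alpha_1_2} gives, as long as $v>0$,
\begin{equation*}
\dot v \le -k_1 v^{1-1/\mu}-k_2 v^{1+1/\mu}.
\end{equation*}
The right-hand side is strictly negative for $v>0$, so $v$ is decreasing. Until $v$ reaches zero we can therefore separate variables:
\begin{equation*}
T(x_0)\le \int_0^{V(x_0)} \frac{dv}{k_1 v^{1-1/\mu}+k_2 v^{1+1/\mu}} \le \int_0^{\infty} \frac{dv}{k_1 v^{1-1/\mu}+k_2 v^{1+1/\mu}}.
\end{equation*}
Here $T(x_0)$ is the first time $v$ hits $0$. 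To make this rigorous, I would view $t$ as a function of $v$ on the decreasing branch, or equivalently apply the standard comparison lemma. The last integral is finite: near $0$ the integrand behaves like $v^{-1+1/\mu}$, and near $\infty$ like $v^{-1-1/\mu}$. Its value does not depend on $x_0$.

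The key step is evaluating this integral in closed form. Factor out $v^{1-1/\mu}$ and substitute $w=v^{1/\mu}$, so that $dv=\mu w^{\mu-1}dw$ and $v^{1-1/\mu}=w^{\mu-1}$. The integral becomes
\begin{equation*}
\int_0^\infty \frac{\mu\, dw}{k_1+k_2 w^2}=\frac{\mu}{\sqrt{k_1k_2}}\cdot\frac{\pi}{2},
\end{equation*}
using $\int_0^\infty dw/(a+bw^2)=\pi/(2\sqrt{ab})$. This yields \eqref{eqn_prescribed_time}. The choice of exponents in \eqref{eqn_alpha_1_2} is exactly what makes the substitution collapse to an arctangent integral, and this is the only nonroutine part of the argument.

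For the comparison with \eqref{eqn_T_max_1}, substitute $\alpha_1,\alpha_2$ into \eqref{eqn_T_max_1}. This gives $\mu/k_1+\mu/k_2=\mu(k_1+k_2)/(k_1k_2)$. By AM–GM, $k_1+k_2\ge 2\sqrt{k_1k_2}$, so this is at least $2\mu/\sqrt{k_1k_2}$. That in turn exceeds $\mu\pi/(2\sqrt{k_1k_2})$, since $2>\pi/2$. Hence the new bound is strictly smaller for all $k_1,k_2>0$.

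The main obstacle is rigor in the comparison step, specifically ensuring that $v$ actually reaches $0$ within the stated time rather than merely tending to it. I would handle this by noting that $v$ is strictly decreasing while positive, and that the elapsed time to go from $V(x_0)$ down to any level $\epsilon>0$ is at most the integral from $\epsilon$ to $V(x_0)$. Letting $\epsilon\to 0$ and using finiteness of the integral gives arrival at $v=0$, and hence at $x^*$ by positive definiteness of $V$, no later than $T_{\max}$.
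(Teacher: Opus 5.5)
Your proposal is correct and follows the paper's proof. Both arguments use the same substitution $w=v^{1/\mu}$ to reduce the settling-time integral to $\mu\int dw/(k_1+k_2w^2)$. The cap $\pi/2$ comes from the improper integral over $[0,\infty)$ in your version and from $\arctan\le\pi/2$ in the paper's. The comparison with \eqref{eqn_T_max_1} is the same AM--GM step. Your explicit comparison argument on the differential inequality fills in a step the paper leaves implicit, since it only integrates the auxiliary equality ODE.
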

\begin{proof}
    Consider an auxiliary ODE,
    \[
    \dot{V}=-k_1V^{1-\frac{1}{\mu}}-k_2V^{1+\frac{1}{\mu}}, \quad k_1>0, \ \ k_2>0, \ \mu>1,
    \]
    under the initial condition $V(0)>0$.
    Clearly, $V=0$ is the equilibrium point of this ODE. Applying separation of variables results in
    \[
    t=-\int_{W_0}^{W(t)}\frac{dV}{k_1V^{1-\frac{1}{\mu}}+k_2V^{1+\frac{1}{\mu}}}.
    \]
    Now introduce $W\triangleq V^{\frac{1}{\mu}}$, which also implies that $V=W^\mu$. Thus,
    \[
    \begin{aligned}
        t&=-\int_{W(0)}^{W(t)}\frac{d(W^\mu)}{k_1W^{\mu(1-\frac{1}{\mu})}+k_2W^{\mu(1+\frac{1}{\mu})}} \\
        &=-\mu\!\int_{W(0)}^{W(t)}\frac{W^{\mu-1}dW}{k_1W^{\mu-1}+k_2W^{\mu+1}}=-\frac{\mu}{k_2}\int_{W(0)}^{W(t)}\frac{dW}{\frac{k_1}{k_2}+W^2}\\
        &=-\frac{\mu}{k_2}\!\left[\frac{1}{\sqrt{\frac{k_1}{k_2}}}\arctan\!\left(\frac{W}{\sqrt{\frac{k_1}{k_2}}}\right)\right|_{W(0)}^{W(t)}\\
        &=\frac{\mu}{\sqrt{k_1k_2}}\!\left(\arctan\!\left(\sqrt{\frac{k_2}{k_1}}W(0)\right)\!-\arctan\!\left(\sqrt{\frac{k_2}{k_1}}W(t)\right)\!\right)\!.
    \end{aligned}
    \]
    Thus, when $V(t)\rightarrow0$ (namely $W(t)\rightarrow0$), the required time is
    \[
    t=\frac{\mu}{\sqrt{k_1k_2}}\arctan\!\left(\sqrt{\frac{k_2}{k_1}}W(0)\right).
    \]
    Since $\arctan\!\left(\sqrt{\tfrac{k_2}{k_1}},W(0)\right)\!\leq\tfrac{\pi}{2}$ for any initial value $W(0)$, an upper bound of the settling time is
    \[
    T_{\max}=\frac{\mu\pi}{2\sqrt{k_1k_2}},
    \]
    which completes the first part of the proof.

    Then, when $\alpha_1=1-\frac{1}{\mu},~\alpha_2=1+\frac{1}{\mu},$ the upper bound of the settling time in \eqref{eqn_T_max_1} is larger than that in \eqref{eqn_prescribed_time}, as shown below:
    \[
    \begin{aligned}
         &\frac{\mu (k_1+k_2)}{k_1k_2}=\frac{\mu}{\sqrt{k_1k_2}}\frac{k_1+k_2}{\sqrt{k_1k_2}}=\frac{\mu}{\sqrt{k_1k_2}}\!\left(\sqrt{\frac{k_1}{k_2}}+ \sqrt{\frac{k_2}{k_1}}\right) \\
         &\!\geq\frac{2\mu}{\sqrt{k_1k_2}}>\frac{\mu\pi}{2\sqrt{k_1k_2}}   
    \end{aligned}
    \]
    holds for any $k_1>0,k_2>0$, which completes the second part of the proof.
\end{proof}
\begin{remark}\label{remark_mu_2}
Compared to \cite[Lemma 2]{parsegov2012nonlinear}, where
\[
\begin{aligned}
&\alpha_1 = 1-\frac{1}{2\mu}, \quad \alpha_2 = 1+\frac{1}{2\mu}, \quad \mu>1 \\
\Leftrightarrow~& \alpha_1 = 1-\frac{1}{\mu}, \quad \alpha_2 = 1+\frac{1}{\mu}, \quad \mu>2,
\end{aligned}
\]
Eqn. \eqref{eqn_alpha_1_2} of Proposition 
\ref{lemma_fixed_time_stable_simplified} permits the choice $\mu=2$. In particular, $\mu=2$ corresponds to the Euclidean norm of a vector, which significantly simplifies the physical implementation in analog hardware.
\end{remark}

\begin{proposition}\label{proposition_unconstrained}
Design a Newton-based fixed-time-stable ODE by
    \begin{equation}\label{eqn_FT_Newton}
    \left(\nabla^2f(x)\right) \dot{x}=-k\nabla f(x) \cdot \left(\frac{1}{\|\nabla f(x)\|} + \|\nabla f(x)\|\right)
\end{equation}
for corresponding to the unconstrained optimization \eqref{eqn_unconstrained_problem}. Given a prescribed settling time $T_p$ and choosing $k=\frac{\pi}{2T_p}$, the solution trajectory of ODE \eqref{eqn_FT_Newton} is fixed-time-stable to an optimal solution $x^*$ of the unconstrained optimization \eqref{eqn_unconstrained_problem} within the settling time $T_p$ for all $x(0)\in\rr^{n_x}$.
\end{proposition}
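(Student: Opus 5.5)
The plan is to use the gradient itself as the Lyapunov variable and then apply Proposition~\ref{lemma_fixed_time_stable_simplified} with $\mu=2$. Take
\[
V(x)=\|\nabla f(x)\|^2 ,
\]
which is nonnegative and vanishes exactly at minimizers of \eqref{eqn_unconstrained_problem}. This requires the Hessian $\nabla^2 f(x)$ to be invertible along the trajectory, so that \eqref{eqn_FT_Newton} defines $\dot x$. I will state this as a standing assumption, for example strong convexity and twice differentiability of $f$. Under it the minimizer $x^*$ is unique, and $V$ is positive definite with respect to $x^*$.

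Along trajectories, the chain rule gives $\frac{d}{dt}\nabla f(x)=\nabla^2 f(x)\dot x$. By \eqref{eqn_FT_Newton} this equals $-k\nabla f(x)\bigl(\|\nabla f(x)\|^{-1}+\|\nabla f(x)\|\bigr)$, so the Newton structure makes the gradient evolve independently of the problem data. Then
\[
\dot V = 2\nabla f(x)^\top \nabla^2 f(x)\dot x = -2k\|\nabla f(x)\|^2\Bigl(\tfrac{1}{\|\nabla f(x)\|}+\|\nabla f(x)\|\Bigr) = -2k V^{1/2}-2kV^{3/2}.
\]
This is exactly \eqref{eqn_Lyapunov_function_fixed_time_stable} with $k_1=k_2=2k$, $\alpha_1=1/2$ and $\alpha_2=3/2$, which is \eqref{eqn_alpha_1_2} with $\mu=2$, the case singled out in Remark~\ref{remark_mu_2}. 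Proposition~\ref{lemma_fixed_time_stable_simplified} then gives
\[
T_{\max}=\frac{2\pi}{2\sqrt{4k^2}}=\frac{\pi}{2k}.
\]
With $k=\pi/(2T_p)$ this equals $T_p$, and the bound holds uniformly in $x(0)$.

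An equivalent and slightly cleaner route works directly with $g=\|\nabla f\|$. It satisfies $\dot g=-k(1+g^2)$, which integrates explicitly to $\arctan g(t)=\arctan g(0)-kt$. Hence $g$ reaches zero at time $\arctan(g(0))/k<\pi/(2k)=T_p$. Once $\nabla f(x)=0$, $x$ equals the unique minimizer $x^*$. I would present this as a confirmation of the constant.

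The main obstacle is well-posedness rather than the Lyapunov computation. Two issues arise.

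First, the right-hand side of \eqref{eqn_FT_Newton} is singular at $\nabla f=0$. However, $\nabla f\,\|\nabla f\|^{-1}$ stays bounded, so the vector field is only discontinuous there. I would argue existence of solutions away from the equilibrium by local Lipschitz continuity, assuming $f\in\mathcal{C}^2$ with locally Lipschitz Hessian. At the equilibrium I would define $\dot x=0$, as is standard in finite-time stability.

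Second, the trajectory must not escape to infinity before $T_p$. Since $\|\nabla f(x(t))\|$ decreases monotonically, strong convexity confines $x(t)$ to the bounded set $\{x:\|\nabla f(x)\|\le\|\nabla f(x(0))\|\}$. This rules out finite escape, so the solution exists up to the settling time.

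If only convexity is assumed, the Hessian may be singular and the statement needs reinterpretation. I would therefore state the invertibility or strong convexity hypothesis explicitly.
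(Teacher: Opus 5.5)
Your proposal is correct and is essentially the paper's argument: the paper also substitutes $y=\nabla f(x)$, takes $V=\tfrac12\|y\|^2$ to obtain $\dot V=-k(2V)^{1/2}-k(2V)^{3/2}$, and invokes Proposition~\ref{lemma_fixed_time_stable_simplified} with $\mu=2$; your unnormalized $V=\|\nabla f\|^2$ only replaces $(k_1,k_2)=(\sqrt2 k,2\sqrt2 k)$ by $(2k,2k)$, which has the same product $4k^2$ and hence gives the same bound $\pi/(2k)=T_p$. The paper leaves the well-posedness hypotheses implicit (invertible Hessian via strong convexity, locally Lipschitz Hessian, no finite escape), so stating them explicitly sharpens the same argument rather than changing the route.
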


\begin{proof}
    By introducing $y=\nabla f(x)$, we have that $\dot{y} = \nabla^2f(x) \dot{x}$ and ODE \eqref{eqn_FT_Newton} is equivalent to
\begin{equation}\label{eqn_FT_Newton_equivalent}
    \dot{y}=-ky\cdot \left(\frac{1}{\|y\|}+\|y\| \right).
\end{equation}
Now consider the Lyapunov function $V(y)=\tfrac{1}{2}\|y\|^2$, which is radially unbounded. The time derivative $\dot{V}(y)$ along the trajectories of ODE \eqref{eqn_FT_Newton_equivalent} is
\[
\begin{aligned}
\dot{V}(y)&=y^\top \dot{y}=-k\|y\|-k\|y\|^{3}\\
&= -k (2V)^{1/2}-k (2V)^{3/2}<0,    
\end{aligned}
\]
which is the special case: $k_1=\sqrt{2}k,~k_2=2\sqrt{2}k,~\alpha_1=\frac{1}{2},~\alpha_2=\frac{3}{2}$ in Lemma \ref{lemma_fixed_time_stable} and $k_1=\sqrt{2}k,~k_2=2\sqrt{2}k,~\mu=2$ in Lemma \ref{lemma_fixed_time_stable_simplified}. Thus, after choosing $k=\frac{\pi}{2T_p}$, the settling time will always be smaller than the prescribed (desired) time by Lemma \ref{lemma_fixed_time_stable_simplified}, which completes the proof.
\end{proof}

\section{Method}
\subsection{Transforming non-smooth Lasso into smooth nonnegative QP}
The Lasso problem augments a least-squares objective with an $\ell_1$-regularization term, 
\begin{equation}\label{eqn_Lasso}
    \min_{x\in\mathbb{R}^{n_x}} \|Ax-b\|_2^2 + \tau \|x\|_1,
\end{equation}
where $A\in\mathbb{R}^{m\times n_x}, b\in\mathbb{R}^{m}$ is the problem data, and $\tau>0$ is a fixed regularization parameter used to trade off between quality of fit (small $\tau$) and sparsity of the solution of $x$ (large $\tau$). 

\begin{remark}
(Uniqueness of the minimizer): 
Assume that $A$ has full column rank. Then the Lasso problem \eqref{eqn_Lasso} admits a unique minimizer, since the positive definiteness of $A^\top A$ implies that the quadratic term $\|Ax-b\|_2^2$ is strictly convex, and hence the overall objective function is strictly convex.

If $A$ is not a full rank matrix, an alternative way to guarantee the uniqueness of the minimizer is to introduce an additional $\ell_2$-regularization term,
\begin{equation}\label{eqn_elastic_net}
    \min_{x\in\mathbb{R}^{n_x}} \|Ax-b\|_2^2 + \tau \|x\|_1 + \rho \|x\|_2^2,
\end{equation}
where $\rho>0$. The resulting $\ell_2$-regularized Lasso problem \eqref{eqn_elastic_net} is known as the Elastic Net problem \cite{zou2005regularization}, which clearly admits a unique minimizer.
\end{remark}

The non-smooth $\ell_1$ term in Lasso is not differentiable at $x_i=0~\forall i=1,\cdots,n_x$, where the derivative changes abruptly from $-1$ to $1$, making the second derivative undefined at this point. Consequently, Proposition \ref{proposition_unconstrained} is not directly applicable to the Lasso problem. To handle the non-smooth $\ell_1$ term, we first demonstrate that $\ell_2$-regularized Lasso \eqref{eqn_elastic_net} can be equivalently transformed into a non-negative QP as stated in the following proposition.
\begin{proposition}\label{lemma_Lasso_QP}
    Consider the $\ell_2$-regularized Lasso \eqref{eqn_elastic_net}, let $g:\rr^{n_x}\times \rr^{n_x}\rightarrow\rr$ be defined by
    \begin{equation}
    \begin{aligned}
        g(x_{+},x_{-})&\triangleq \|A(x_{+}-x_{-})-b\|_2^2 +\tau [1 \cdots{} 1]\left[\begin{array}{c}
             x_{+}  \\
             x_{-}
        \end{array}\right] \\
        &\quad + \rho \|x_{+}\|_2^2  + \rho \|x_{-}\|_2^2,
    \end{aligned}
    \end{equation}
    and consider the non-negative QP
    \begin{equation}\label{eqn_NNQP}
        \min_{x_{+},x_{-}\geq0}\, g(x_{+},x_{-}).
    \end{equation}
    Then a unique minimizer $(x_{+}^*,x_{-}^*)$ of non-negative QP \eqref{eqn_NNQP} satisfies the complementarity condition: $x_{+,i}^*\cdot x_{-,i}^*=0~\forall i=1,\cdots{},n_x$.
    
    Furthermore, $x^*\triangleq x_{+}^*-x_{-}^*$ is the unique minimizer of Lasso \eqref{eqn_elastic_net}.
\end{proposition}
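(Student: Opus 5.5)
The proof proceeds in three steps: existence and uniqueness of the QP minimizer, the complementarity property, and then equivalence with \eqref{eqn_elastic_net}.

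\textbf{Existence and uniqueness.} The function $g$ is the sum of a convex quadratic, a linear term, and $\rho(\|x_+\|_2^2+\|x_-\|_2^2)$ with $\rho>0$. Hence $g$ is $2\rho$-strongly convex on $\rr^{2n_x}$. The feasible set $\{x_+,x_-\geq 0\}$ is closed, convex and nonempty. Strong convexity gives coercivity, so a minimizer exists, and strict convexity makes it unique. This justifies the phrase ``the unique minimizer'' in Proposition \ref{lemma_Lasso_QP}.

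\textbf{Complementarity.} I will argue by contradiction with a shifting argument rather than through the KKT conditions. Suppose $x_{+,i}^*>0$ and $x_{-,i}^*>0$ for some index $i$. Set $\delta=\min(x_{+,i}^*,x_{-,i}^*)>0$, and define $\tilde x_\pm=x_\pm^*-\delta e_i$, which is still feasible. The difference $\tilde x_+-\tilde x_-=x_+^*-x_-^*$ is unchanged, so the least-squares term is unchanged. The linear term drops by $2\tau\delta>0$. The quadratic term cannot increase: $a\mapsto a^2$ is increasing on $[0,\infty)$ and $0\le x_{\pm,i}^*-\delta< x_{\pm,i}^*$, so it in fact strictly decreases. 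Thus $g(\tilde x_+,\tilde x_-)<g(x_+^*,x_-^*)$, which contradicts optimality. Therefore $x_{+,i}^*x_{-,i}^*=0$ for every $i$.

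\textbf{Equivalence with \eqref{eqn_elastic_net}.} Let $F$ denote the objective of \eqref{eqn_elastic_net}. For any $x\in\rr^{n_x}$, take the canonical split $x_+=\max(x,0)$ and $x_-=\max(-x,0)$. Then $x_+,x_-\ge 0$ are complementary, $\|x\|_1=\mathbf{1}^\top(x_++x_-)$, and $\|x\|_2^2=\|x_+\|_2^2+\|x_-\|_2^2$, so $g(x_+,x_-)=F(x)$. Conversely, every complementary feasible pair is exactly the canonical split of its difference, so on such pairs $g=F(x_+-x_-)$. The minimizer $(x_+^*,x_-^*)$ is complementary by the previous step, so $F(x^*)=g(x_+^*,x_-^*)$. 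For any $x$ with canonical split $(x_+,x_-)$ this gives
\[
F(x^*)=g(x_+^*,x_-^*)\le g(x_+,x_-)=F(x).
\]
So $x^*$ minimizes $F$. Uniqueness follows from the strong convexity of $F$, or equivalently from the uniqueness of the QP minimizer: the canonical split is injective.

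\textbf{Main subtlety.} The one point needing care is the identity $\|x\|_2^2=\|x_+\|_2^2+\|x_-\|_2^2$. It holds only under complementarity, so the complementarity step must be established first. That step is easy here because $\rho>0$ makes the shift strictly improving. If one wanted to handle $\rho=0$ with $A$ of full column rank, the shift would still strictly decrease $g$ through the $\tau$ term. Uniqueness of the QP minimizer would then instead follow from the uniqueness of the Lasso minimizer combined with complementarity.
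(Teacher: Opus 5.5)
Your proof is correct and follows the same route as the paper. Uniqueness comes from strong convexity (the paper's $Q\succ 0$), complementarity from subtracting $\min\{x_{+,i}^*,x_{-,i}^*\}e_i$ from both blocks, and optimality for \eqref{eqn_elastic_net} from the canonical split $x_\pm=\max(\pm x,0)$. Your coordinatewise monotonicity argument for the $\rho$-term and the direct chain $F(x^*)=g(x_+^*,x_-^*)\le g(x_+,x_-)=F(x)$ are tidier versions of the paper's Jensen-type step and its final contradiction argument.
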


\begin{proof}
The non-negative QP \eqref{eqn_NNQP} can be written in the standard QP formulation
    \begin{equation}\label{eqn_NNQP_expanded}
        \begin{aligned}
            \min_{x_+\in\mathbb{R}^{n_x},x_-\in\mathbb{R}^{n_x}}&\quad \frac{1}{2}\left[\begin{array}{c}
                 x_+  \\
                 x_- 
            \end{array}\right]^\top Q   \left[\begin{array}{c}
                 x_+  \\
                 x_- 
            \end{array}\right] + \left[\begin{array}{c}
                 x_+  \\
                 x_- 
            \end{array}\right]^\top q\\
            \text{s.t.}&\quad x_+\geq0, ~x_-\geq0
        \end{aligned}
    \end{equation}
    where
    \begin{equation}\label{eqn_Q_q}
        \begin{aligned}
        &Q\triangleq\left[\begin{array}{cc}
                A^\top A &  -A^\top A\\
                -A^\top A & A^\top A
            \end{array}\right] +  \left[\begin{array}{cc}
                \rho I &  \\
                 & \rho I
            \end{array} \right],\\
        &q\triangleq \left[\begin{array}{c}
                 A^\top b   \\
                 -A^\top b  
            \end{array}\right] + \tau \left[\begin{array}{c}
                 \mathbf{1}_{n_x}  \\
                  \mathbf{1}_{n_x} 
            \end{array} \right],
    \end{aligned}    
    \end{equation}
    and $\mathbf{1}_{n_x}:=[1 \cdots 1]^\top\in\rr^{n_x}$. Clearly, $Q\succ0$ and thus the non-negative QP \eqref{eqn_NNQP} admits a unique minimizer.

The proof is by contradiction.
    Assume that there are some indices $i,~1\leq i \leq n_x$, $x_{+,i}^* x_{-,i}^*\neq 0$. Since $x_{+,i}^*,~ x_{-,i}^*\geq0$, this implies that $x_{+,i}^* x_{-,i}^*>0$. Then, we can construct a solution $(\Bar{x}_{+},\Bar{x}_{-})$ defined by
    \[
    \Bar{x}_{+}\triangleq x_{+}^* -\alpha e_i,\quad \Bar{x}_{-}\triangleq x_{-}^* -\alpha e_i,
    \]
    where $\alpha\triangleq \min\{x_{+,i}^*,~ x_{-,i}^*\}>0$ and $e_i$ is the $i$th column of the identity matrix. Thus, we have that
    \[
    \Bar{x}_+-\Bar{x}_-=x_+^*-x_-^*\Rightarrow \|A(\Bar{x}_+-\Bar{x}_-)-b\|_2^2=\|A(x_+^*-x_-^*)-b\|_2^2
    \]
    and
    \[
    \begin{aligned}
    \tau [1\cdots{}1] \left[\begin{array}{c}
         \Bar{x}_+  \\
         \Bar{x}_- 
    \end{array}\right]&=\tau[1\cdots{}1] \left[\begin{array}{c}
         x_+^*  \\
         x_-^* 
    \end{array}\right] -2\tau n_x\alpha\\  
    &<\tau[1\cdots{}1] \left[\begin{array}{c}
         x_+^*  \\
         x_-^* 
    \end{array}\right].
    \end{aligned}
    \]
    If we can prove that $\|\Bar{x}_+\|_2^2\leq \|x_+^*\|_2^2 $ and $\|\Bar{x}_-\|_2^2\leq \|x_-^*\|_2^2$, then we have that
    \[
       g(\Bar{x}_+,\Bar{x}_{-})< g(x_+^*,x_-^*),
   \]
    which contradicts that $(x_{+}^*,x_{-}^*)$ is the unique minimizer of non-negative QP \eqref{eqn_NNQP}, and therefore the initial assumption $x_{+,i}^*x_{-,i}^*\neq 0$ does not hold.
    
    To prove the two inequalities, choosing 
    \[
    \beta_{x_+}\triangleq\frac{\alpha}{x_{+,i}^*},~\beta_{x_-}\triangleq\frac{\alpha}{x_{-,i}^*} \Rightarrow \beta_{x_+}\in(0,1],~\beta_{x_-}\in(0,1],
    \]
    implies that
    \[
    \begin{aligned}
    \Bar{x}_{+}&=(1-\beta_{x_+}) x_{+}^* + \beta_{x_+}(x_{+}^* -\alpha e_i),\\
    \Bar{x}_{-}&=(1-\beta_{x_-}) x_{-}^* + \beta_{x_+}(x_{-}^* -\alpha e_i).
    \end{aligned}
    \]
    Since the regularization term $\|\cdot\|_2^2$ is convex and satisfies Jensen's inequality,
    \[
    \begin{aligned}
    \|\Bar{x}_+\|_2^2 &= \|(1-\beta_{x_+}) x_{+}^* + \beta_{x_+}(x_{+}^* -\alpha e_i) \|_2^2\\
    &\leq (1-\beta_{x_+})\|x_{+}^*\|_2^2 +  \beta_{x_+}\|x_{+}^* -\alpha e_i\|_2^2     \\
    &=\|x_+^*\|_2^2 - \beta_{x_+}\left(\|x_+^*\|_2^2 - \|x_{+}^* -\alpha e_i\|_2^2\right)\\
    &\leq\|x_+^*\|_2^2.
    \end{aligned}
    \]
   A similar analysis gives that $\|\Bar{x}_-\|_2^2 \leq\|x_-^*\|_2^2$. Thus, the first part of the proof is completed.

 The second part of the proof is also by contradiction. Let $x^*=x_{+}^*-x_{-}^*$ and assume that $x^*\neq\argmin$ \eqref{eqn_elastic_net} (not the minimizer of Lasso \eqref{eqn_elastic_net}). Let $\Bar{x}=\argmin$ \eqref{eqn_elastic_net} (the unique minimizer of Lasso \eqref{eqn_elastic_net}). Setting
    \[
    \Bar{x}_+\triangleq \max(\Bar{x},0),\quad \Bar{x}_-\triangleq \max(-\Bar{x},0)
    \]
    gives that $\Bar{x}=\Bar{x}_+-\Bar{x}_-$.
    Since $\Bar{x}_{+,i}\Bar{x}_{-,i}=0,~\forall i=1,\cdots{},n_x$, we have that
    \[
    \begin{aligned}
        g(\Bar{x}_+,\Bar{x}_-) &=  \|A\Bar{x}-b\|_2^2+\tau\|\Bar{x}\|_1 + \rho \|\Bar{x}\|_2^2 \\
        &<\|Ax^*-b\|_2^2+\tau\|x^*\|_1 + \rho \|x^*\|_2^2
    \end{aligned}
    \]
    which contradicts $x^*\neq\argmin$ \eqref{eqn_elastic_net} and thus completes the second part of the proof.
\end{proof}

In addition to yielding a smooth QP, introducing nonnegative constraints is also necessary because analog hardware typically supports only nonnegative variables. For instance, voltages are inherently nonnegative.

\subsection{Transforming non-negative QP into fixed-time-stable Newton ODE}
The Karush–Kuhn–Tucker (KKT) condition \cite[Ch.\ 5]{boyd2004convex} for the non-negative QP \eqref{eqn_NNQP_expanded} is
\begin{subequations}\label{eqn_KKT}
    \begin{align}
        & Q\left[\begin{array}{c}
                 x_+  \\
                 x_- 
            \end{array}\right]-\left[\begin{array}{c}
                 v \\
                 s 
            \end{array}\right] + q =0,\label{eqn_KKT_a}\\
        &\left[\begin{array}{c}
                 x_+  \\
                 x_- 
            \end{array}\right] \pdot \left[\begin{array}{c}
                 v \\
                 s
            \end{array}\right]=0,\label{eqn_KKT_complementarity}\\
            &\left[\begin{array}{c}
                 x_+  \\
                 x_- 
            \end{array}\right]\geq0, \left[\begin{array}{c}
                 v  \\
                 s
            \end{array}\right]\geq0,\label{eqn_KKT_non_negative}
    \end{align}
\end{subequations}
where $v\in\rr^{n_x}, s\in\rr^{n_x}$ denote the Lagrangian dual variable for inequality constraints $x_+\geq0$ and $x_-\geq0$, respectively. The symbol $\pdot$ denotes the Hadamard product of two vectors.
For simplicity, adopt the notations 
\begin{equation}
    z\triangleq\left(\begin{array}{c}
         x_+  \\
         x_- 
    \end{array}\right)\in\rr^{2n_x}, \quad w\triangleq\left(\begin{array}{c}
         v\\
         s
    \end{array}\right)\in\rr^{2n_x}.
\end{equation}
KKT \eqref{eqn_KKT} constitutes a set of nonlinear equations with non-negativity constraints, and the Newton-like interior-point methods (IPMs) generally do not achieve the same computational efficiency as Newton methods in unconstrained nonlinear equations, because the step size must be carefully controlled to ensure that iterates remain within the non-negative bounds.

\begin{remark}
    \textbf{Continuous interior-point flow}: here we argue that \textbf{the non-negativity constraints \eqref{eqn_KKT_non_negative} can be naturally satisfied in continuous time}, rather than in a discrete iterative setting, due to the existence of the complementarity constraint \eqref{eqn_KKT_complementarity}, as formally proved in Proposition \ref{lemma_non_negative}. Moreover, a continuous-time interior-point flow derived from the central-path theory of IPMs \cite{wright1997primal} is: for any $\theta>0$ starting from $(z(0),w(0))$, there is a unique strictly positive point $(z(\theta),w(\theta))$ for KKT \eqref{eqn_KKT}, such that
\[
    \begin{aligned}
        & Qz(\theta)-w(\theta)+ q =\theta\left(Qz(0)-w(0) + q\right),\\
        &z(\theta) \pdot w(\theta)=\theta(z(0) \pdot w(0)),
    \end{aligned}
\]
and the trajectory $(z(\theta),w(\theta))$ is a continuous bounded trajectory and, when $\theta\rightarrow0$, the limit point $(z(\theta),w(\theta))$ is the solution of KKT \eqref{eqn_KKT}.
\end{remark}

Also inspired by Proposition \ref{proposition_unconstrained}, we convert KKT \eqref{eqn_KKT}, specifically \eqref{eqn_KKT_a} and \eqref{eqn_KKT_complementarity}, without explicitly enforcing \eqref{eqn_KKT_non_negative}, into a Newton-based fixed-time-stable ODE, 
\begin{equation}\label{eqn_KKT_fixed-time-stable_ODE}
\begin{aligned}
Q \dot{z} - \dot{w}&=-k\frac{Qz-w+q }{\left\|\left[\begin{array}{c}
Qz-w+q \\
z\pdot w
\end{array}\right]\right\|} \\
&\quad- k\frac{Qz-w+q}{\left\|\left[\begin{array}{c}
Qz-w+q  \\
z\pdot w
\end{array}\right]\right\|^{-1}},\\
W \dot{z} + Z\dot{w} &=-k\frac{z\pdot w}{\left\|\left[\begin{array}{c}
Qz-w+q  \\
z\pdot w
\end{array}\right]\right\|}\\
&\quad- k\frac{z\pdot w}{\left\|\left[\begin{array}{c}
Qz-w+q  \\
z\pdot w
\end{array}\right]\right\|^{-1}},
\end{aligned}
\end{equation}
where $k>0$ and $Z\triangleq \diag(z),W\triangleq\diag(w)$.

\begin{proposition}\label{lemma_uniqueness}
 Given an initial point $(z(0),w(0))$, the trajectory $(z(t),w(t))$ of ODE \eqref{eqn_KKT_fixed-time-stable_ODE} exists and is unique for all $t\geq0$. Furthermore, for
\begin{equation}
    k=\frac{\pi}{2T_p},
\end{equation}
the settling time of ODE \eqref{eqn_KKT_fixed-time-stable_ODE} is bounded by $T_p$.
\end{proposition}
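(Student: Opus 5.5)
We follow the template of Proposition \ref{proposition_unconstrained}: we change variables to the KKT residual and observe that the ODE \eqref{eqn_KKT_fixed-time-stable_ODE} becomes a closed, explicitly solvable flow in the residual. Define
\[
r_1 \triangleq Qz-w+q,\qquad r_2\triangleq z\pdot w,\qquad r\triangleq\left[\begin{array}{c} r_1\\ r_2\end{array}\right].
\]
Differentiating gives $\dot r_1 = Q\dot z-\dot w$ and $\dot r_2 = W\dot z+Z\dot w$. These are exactly the left-hand sides of \eqref{eqn_KKT_fixed-time-stable_ODE}, so the system reads
\[
\dot r = -k\, r\left(\frac{1}{\|r\|}+\|r\|\right),
\]
which is identical to \eqref{eqn_FT_Newton_equivalent}.

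\textbf{Settling time.} We take $V=\tfrac12\|r\|^2$ and obtain $\dot V=-k\|r\|-k\|r\|^3=-\sqrt2 k V^{1/2}-2\sqrt2 k V^{3/2}$. This matches the case $\mu=2$, $k_1=\sqrt2k$, $k_2=2\sqrt2k$ of Proposition \ref{lemma_fixed_time_stable_simplified}. The resulting bound is $T_{\max}=\frac{2\pi}{2\sqrt{4k^2}}=\frac{\pi}{2k}=T_p$. Hence $r(t)=0$ for all $t\ge T_p$; that is, \eqref{eqn_KKT_a} and \eqref{eqn_KKT_complementarity} hold. Nonnegativity \eqref{eqn_KKT_non_negative} is deferred to Proposition \ref{lemma_non_negative}. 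In fact, $r(t)=\phi(t)r(0)$ with a scalar $\phi$ decreasing from $1$ to $0$, so each product $z_iw_i$ keeps its initial sign. This is what will be used there.

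\textbf{Existence and uniqueness.} This is the main obstacle, because \eqref{eqn_KKT_fixed-time-stable_ODE} is an implicit ODE in $(z,w)$. To put it in explicit form $\dot{(z,w)}=F(z,w)$ we need the Jacobian
\[
J(z,w)=\left[\begin{array}{cc} Q & -I\\ W & Z\end{array}\right]
\]
to be invertible along the trajectory. With $Q\succ0$ (Proposition \ref{lemma_Lasso_QP}), the standard IPM argument applies whenever $z>0$ and $w>0$. Suppose $J[u;v]=0$; then $v=Qu$ and $Wu+ZQu=0$, so $(Z^{-1}W+Q)u=0$ and therefore $u=0$ and $v=0$.

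The plan is to start from a strictly positive $(z(0),w(0))$, as in the interior-point flow of the remark, and argue by continuation:
\begin{itemize}
\item On any interval where $z,w>0$, $F$ is locally Lipschitz away from $r=0$, so the Picard--Lindel\"of theorem gives local existence and uniqueness.
\item Along the solution, $z_iw_i=\phi(t)z_i(0)w_i(0)>0$ for $t<T_p$, so no component can cross zero without the product vanishing. Hence positivity is preserved.
\item Boundedness of $(z,w)$ follows from $r_1$ and $r_2$ being bounded. As in the central-path argument, $(z-z^*)^\top(w-w^*)=(z-z^*)^\top Q(z-z^*)+\ldots$, which combined with positivity yields a bounded level set. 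Therefore the solution cannot escape in finite time.
\end{itemize}

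Two delicate points remain. At $r=0$ (for $t\ge T_p$) the right-hand side is not Lipschitz, since it contains $r/\|r\|$. We define $\dot r=0$ there and note that uniqueness is inherited from the explicit scalar solution $\phi$. We then set $\dot z=\dot w=0$ for $t\ge T_p$, which is the unique continuation because $J$ is invertible at the limit point. That limit point is strictly complementary, a fact that would also need checking.
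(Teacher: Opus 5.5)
Your reduction to $r=(Qz-w+q,\,z\pdot w)$ and your settling-time computation ($k_1=\sqrt2k$, $k_2=2\sqrt2k$, $\mu=2$, $T_{\max}=\pi/(2k)$) are exactly the paper's. For existence and uniqueness you take a genuinely different route. The paper applies Okamura's uniqueness theorem, with $V=\tfrac12\|u\|^2$, only to the reduced flow $\dot u=-ku/\|u\|-ku\|u\|$. It never checks that the implicit system can be solved for $(\dot z,\dot w)$. Since $(z,w)\mapsto u$ is not injective (the paper's remark after the proposition concedes that $u$ cannot recover $(z,w)$), this does not by itself give a unique $(z,w)$-trajectory. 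Your argument addresses precisely what the paper skips: invertibility of $J$ on the open orthant, Picard--Lindel\"of, invariance from $r(t)=\phi(t)r(0)$, the a priori bound $\lambda_{\min}(Q)\|z-z^*\|^2\le\mathbf 1^\top r_2+\|r_1\|\,\|z-z^*\|$, and continuation. It also yields Proposition~\ref{lemma_non_negative} for free. It needs $(z(0),w(0))>0$, and that restriction is unavoidable. From $(z(0),w(0))=(0,0)$, both $z\equiv0,\ w=(1-\phi)q$ and $w\equiv0,\ z=-(1-\phi)Q^{-1}q$ solve \eqref{eqn_KKT_fixed-time-stable_ODE}. So for an arbitrary initial point the claimed uniqueness fails at the level of $(z,w)$.

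Two steps do not work as written.

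\textbf{Timing.} $r$ vanishes strictly before $T_p$. With $\psi=\|r(0)\|\phi$ you get $\dot\psi=-k(1+\psi^2)$, so $r$ hits zero at $T^*=\tfrac1k\arctan\|r(0)\|<T_p$; this is the exact time from the proof of Proposition~\ref{lemma_fixed_time_stable_simplified}. Hence $z_iw_i>0$ holds only on $[0,T^*)$, your Picard--Lindel\"of step says nothing on $[T^*,T_p)$, and the switch to $\dot z=\dot w=0$ must occur at $T^*$. To reach $T^*$, use the quantitative bound $z_i,w_i\ge\phi(T')z_i(0)w_i(0)/M$ on $[0,T']$, with $M$ the a priori bound. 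This keeps the solution in a compact subset of the domain.

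\textbf{Strict complementarity.} The limit point need not be strictly complementary. If some $x_i^*=0$ while its subgradient optimality condition is tight, then $z_j^*=w_j^*=0$ for some $j$. Row $j$ of $[W\ Z]$ then vanishes and $J(z^*,w^*)$ is singular, so ``unique continuation because $J$ is invertible at the limit'' fails on degenerate Lasso instances. You do not need $J$ there.
\begin{itemize}
\item The limit exists: the trajectory is bounded, and every cluster point as $t\uparrow T^*$ is a nonnegative KKT point, which is unique since $Q\succ0$.
\item Uniqueness after $T^*$ holds: any solution has $\|r\|$ nonincreasing, hence $r\equiv0$ on $[T^*,\infty)$. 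The set $\{Qz-w+q=0,\ z\pdot w=0\}$ is finite, because the support $S=\{j:z_j\neq0\}$ forces $w_S=0$, $z_{S^c}=0$, $z_S=-Q_{SS}^{-1}q_S$. So a continuous trajectory inside it is constant.
\end{itemize}
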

\begin{proof}
    First, by introducing 
    \[
    u_1 \triangleq Qz-w+q,\quad u_2\triangleq z\pdot w,
    \] 
we have that $\dot{u}_1=Q \dot{z} - \dot{w},~\dot{u}_2= W \dot{z} + Z\dot{w}$. Then, by letting $u\triangleq\mathrm{col}(u_1,u_2)$, ODE \eqref{eqn_KKT_fixed-time-stable_ODE} is equivalently reformulated as
\begin{equation}\label{eqn_ODE_u_simplified}
\dot{u}=-k\frac{u}{\|u\|}-k\frac{u}{\|u\|^{-1}}.
\end{equation}
Thus, we turn to proving that ODE \eqref{eqn_ODE_u_simplified} has a unique solution and is \textit{fixed-time-stable}. Now, consider the Lyapunov function $V(u)=\frac{1}{2}\|u\|^2$, which is radially unbounded. Proving the existence and uniqueness of a solution for ODE \eqref{eqn_ODE_u_simplified} by Okamura's Uniqueness Theorem (see \cite[Thm.\ 3.15.1]{agarwal1993uniqueness}) is equivalent to proving that: \textit{i)} $V(0)=0$, \textit{ii)} $V(u)>0$ if $\|u\|\neq0$, \textit{iii)} $V(u)$ is locally Lipschitz, and \textit{iv)} $\dot{V}(u)\leq0$. By the definition of $V(u)$, it is clear that \textit{i)}, \textit{ii)}, and \textit{iii)} hold. To prove \textit{iv)}, the time derivative $\dot{V}(w)$ along the trajectories of ODE \eqref{eqn_ODE_u_simplified} is
\begin{equation}
    \begin{aligned}
         \dot{V}&=u^\top \dot{u}=-k\|u\|^{2-1}-k\|u\|^{2+1}\\
         &= -k (2V)^{1/2} -k (2V)^{3/2}<0,   
    \end{aligned}
\end{equation}
which completes the first part of the proof. Then, by Proposition \ref{lemma_fixed_time_stable_simplified} and Remark \ref{remark_mu_2} where $\mu=2$, the upper bound for the settling time is 
    \[
T_{\max}=\frac{\pi}{\sqrt{k(2)^{\frac{1}{2}}k(2)^{\frac{3}{2}}}} = \frac{\pi}{2k}=T_p,
    \]
    which completes the second part of the proof.
\end{proof}

\begin{remark}
The trajectory of ODE \eqref{eqn_ODE_u_simplified} cannot recover $(z,w)$; the trajectory of ODE \eqref{eqn_KKT_fixed-time-stable_ODE} is still required, see Alg.\ \ref{alg1}.
\end{remark}

\begin{proposition}\label{lemma_non_negative}
    Given an initial point $(z(0),w(0))>0$, the trajectory $(z(t),w(t))$ of ODE \eqref{eqn_KKT_fixed-time-stable_ODE} always satisfies $z(t)\geq0$, $w(t)\geq0$. 
\end{proposition}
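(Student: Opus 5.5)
The plan is to exploit the reformulation used in the proof of Proposition \ref{lemma_uniqueness}. Along any trajectory of ODE \eqref{eqn_KKT_fixed-time-stable_ODE}, the complementarity residual $u_2=z\pdot w$ obeys the decoupled componentwise dynamics
\[
\dot{u}_{2,i}=-k\,u_{2,i}\left(\frac{1}{\|u\|}+\|u\|\right),\qquad i=1,\dots,2n_x ,
\]
where $u=\mathrm{col}(u_1,u_2)$ as before. The key point is that each product $z_iw_i$ evolves by a scalar linear equation whose time-varying coefficient $\gamma(t)\triangleq k\left(\|u(t)\|^{-1}+\|u(t)\|\right)$ is the same for every $i$. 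This coefficient is finite and continuous as long as $u(t)\neq0$.

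\textbf{Step 1: the products stay positive before settling.} Let $T\le T_p$ be the settling time from Proposition \ref{lemma_uniqueness}, so that $\|u(t)\|>0$ on $[0,T)$. On every interval $[0,t]$ with $t<T$, integrating the scalar equation gives
\[
z_i(t)w_i(t)=z_i(0)w_i(0)\exp\!\left(-\int_0^t\gamma(s)\,ds\right).
\]
Since $z_i(0)w_i(0)>0$ by assumption, we get $z_i(t)w_i(t)>0$ for all $t\in[0,T)$.

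\textbf{Step 2: a sign-preservation argument by continuity.} By Proposition \ref{lemma_uniqueness}, $z_i(t)$ and $w_i(t)$ are continuous in $t$. Suppose some $z_i$ (or $w_i$) became nonpositive at a time $t_1<T$. Let $t_0$ be the first time it reaches $0$; by the intermediate value theorem $t_0\le t_1$. Then $z_i(t_0)w_i(t_0)=0$, which contradicts Step 1. Hence $z(t)>0$ and $w(t)>0$ on $[0,T)$.

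\textbf{Step 3: extension to $t\ge T$.} Passing to the limit $t\to T$ and using continuity gives $z(T)\ge0$ and $w(T)\ge0$. For $t\ge T$ we have $u\equiv0$ (the equilibrium of \eqref{eqn_ODE_u_simplified}), so the right-hand side of \eqref{eqn_KKT_fixed-time-stable_ODE} vanishes, with the convention that it is zero at $u=0$. The trajectory then remains at the KKT point, and the inequalities persist. Note that in the limit the complementarity $z\pdot w=0$ allows individual components to reach zero, which is why the statement has $\ge$ rather than $>$.

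The main obstacle is the implicit use of the well-posedness of the $(z,w)$-trajectory on $[0,T)$. The map $(\dot z,\dot w)\mapsto(\dot u_1,\dot u_2)$ is given by the matrix
\[
\begin{bmatrix} Q & -I\\ W & Z\end{bmatrix},
\]
and this matrix must be invertible. While $z,w>0$ it is: $Q\succ0$ and $Z,W$ are positive diagonal, so it is the standard nonsingular IPM Jacobian, whose Schur complement is $Q+Z^{-1}W\succ0$. I would therefore phrase Steps 1 and 2 as a bootstrap. On the maximal interval on which $z,w>0$, the Jacobian is invertible and the trajectory is well defined by Proposition \ref{lemma_uniqueness}. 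Step 1 then shows that this interval cannot end before $T$, since leaving the positive orthant would require some product $z_iw_i$ to hit zero.
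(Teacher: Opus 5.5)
Your proof is correct and follows the same structure as the paper's. Positivity of the products $z_iw_i$ is propagated along the flow, continuity then forbids any $z_i$ or $w_i$ from changing sign without its product vanishing, and after settling the state sits at the nonnegative KKT point. The one difference is that you obtain $z_iw_i>0$ on $[0,T)$ by explicitly integrating the decoupled equation $\dot u_{2,i}=-\gamma(t)u_{2,i}$ rather than citing Nagumo's theorem, which gives strict (not merely weak) positivity before the settling time and so makes rigorous the paper's informal ``reach the origin and leave the origin'' step.
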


\begin{proof}
The proof is by contradiction. Given an initial point $(z(0),w(0))>0$, we have that $u_2(0)=z(0)\pdot w(0)>0$. According to the Forward Invariance Theorem (Nagumo’s Theorem) of barrier functions (see   \cite{glotfelter2017nonsmooth}), $u_2(t)\geq0$ always holds if $u_2(0)>0$, which implies that $z(t)\geq0$, $w(t)\geq0$ or $z(t)\leq0$, $w(t)\leq0$. Let $\tau$ be defined as $\tau = \inf\{t: z(t) <0, w(t) < 0\}$, so that for all $t\leq \tau$, $z(t) \geq 0$ and $w(t) \geq 0$. Now, for such a time instant $\tau$ to exist, it is necessary that the trajectory of $z$ or $w$ reach the origin and leave the origin so that either $z$ or $w$ can switch signs. As a result, once the trajectories of $u_2$ reach the origin, they remain at the origin. Thus, $z(t)$ and $w(t)$ cannot change the sign and, as a result, there exists no such $\tau$, which completes the proof.
\end{proof}

Combining Proposition \ref{lemma_Lasso_QP}, \ref{lemma_uniqueness}, and  \ref{lemma_non_negative} leads to the next theorem.
\begin{theorem}\label{thm_1}
    Given an initial point $(z(0),w(0))>0$, a prescribed settling time $T_p$, and choosing 
\begin{equation}
k=\frac{\pi}{2T_p}
\end{equation}
    for ODE \eqref{eqn_KKT_fixed-time-stable_ODE}, then its trajectory $(z(t),w(t))$ arrives the equilibrium point at time $T_p$ in the worst case and 
    \[
    x^*=z_{1:n_x}(T_p)-z_{n_x+1:2n_x}(T_p)
    \]
    is a unique optimal solution of Lasso \eqref{eqn_elastic_net}.
\end{theorem}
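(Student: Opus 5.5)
The plan is to chain Propositions \ref{lemma_uniqueness}, \ref{lemma_non_negative} and \ref{lemma_Lasso_QP}, adding one bridging step: the KKT conditions \eqref{eqn_KKT} are necessary and sufficient for optimality of the convex non-negative QP \eqref{eqn_NNQP_expanded}.

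\textbf{Reaching the equilibrium.} With $k=\frac{\pi}{2T_p}$, Proposition \ref{lemma_uniqueness} gives a unique trajectory $(z(t),w(t))$ for all $t\geq0$. Along it, the lifted variable $u=\mathrm{col}(Qz-w+q,\,z\pdot w)$ obeys \eqref{eqn_ODE_u_simplified}. By Proposition \ref{lemma_fixed_time_stable_simplified} with $\mu=2$, $u$ reaches $0$ at some time $T\leq T_p$. Once $u=0$ it stays there, since $u\equiv0$ is the solution of \eqref{eqn_ODE_u_simplified} from the origin. Hence $u(T_p)=0$, that is,
\[
Qz(T_p)-w(T_p)+q=0,\qquad z(T_p)\pdot w(T_p)=0 .
\]
These are exactly \eqref{eqn_KKT_a} and \eqref{eqn_KKT_complementarity}. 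Since $(z(0),w(0))>0$, Proposition \ref{lemma_non_negative} gives $z(t)\geq0$ and $w(t)\geq0$ for all $t$, and in particular at $T_p$, so \eqref{eqn_KKT_non_negative} also holds. Therefore $(z(T_p),w(T_p))$ satisfies the full KKT system \eqref{eqn_KKT}.

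\textbf{From KKT to the Lasso solution.} Because $Q\succ0$ (shown in the proof of Proposition \ref{lemma_Lasso_QP}) and the constraints are linear, KKT is sufficient for optimality. Concretely, for any feasible $\tilde z\geq0$, convexity of the objective gives
\[
\tfrac12\tilde z^\top Q\tilde z+q^\top\tilde z \;\geq\; \tfrac12 z^\top Qz+q^\top z+(Qz+q)^\top(\tilde z-z).
\]
Substituting $Qz+q=w$ turns the last term into $w^\top\tilde z-w^\top z=w^\top\tilde z\geq0$, using $w^\top z=0$ and $w,\tilde z\geq0$. So $z(T_p)$ minimizes \eqref{eqn_NNQP_expanded}, and by strict convexity it is the unique minimizer $(x_+^*,x_-^*)$. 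Proposition \ref{lemma_Lasso_QP} then states that $x^*=z_{1:n_x}(T_p)-z_{n_x+1:2n_x}(T_p)$ is the unique minimizer of \eqref{eqn_elastic_net}.

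\textbf{Main obstacle.} The delicate point is guaranteeing that the $(z,w)$ trajectory remains well defined up to $T_p$. Proposition \ref{lemma_uniqueness} is stated in terms of $u$, and recovering $\dot z,\dot w$ requires the Jacobian $\begin{bmatrix}Q&-I\\ W&Z\end{bmatrix}$ to be invertible along the trajectory. I would address this first, using $Q\succ0$ and $z,w\geq0$: the Schur complement is $W+ZQ$, which is nonsingular whenever $z+w>0$. The degenerate case, where some $z_i=w_i=0$ before $T_p$, is precisely the situation the sign argument in Proposition \ref{lemma_non_negative} handles. I would rely on Propositions \ref{lemma_uniqueness} and \ref{lemma_non_negative} as stated for this, and note that the remainder of the argument needs only the endpoint values at $T_p$.
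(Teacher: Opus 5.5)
Your proposal is correct and takes the same route as the paper, whose proof just chains Propositions~\ref{lemma_uniqueness}, \ref{lemma_non_negative} and \ref{lemma_Lasso_QP}; your explicit KKT-sufficiency argument, and your observation that $u$ stays at $0$ after its hitting time so that the KKT system holds at $T_p$ itself, fill in steps the paper leaves implicit. On your ``main obstacle'': the right-hand side of \eqref{eqn_ODE_u_simplified} is a scalar multiple of $u$, so the flow is radial and $u_2(t)=\theta(t)\,z(0)\pdot w(0)$ with $\theta(t)>0$ before the settling time. Hence $z_i w_i>0$, and by continuity $z,w>0$ strictly on that interval, which makes $W+ZQ$ nonsingular there and confines the degenerate case $z_i=w_i=0$ to the terminal KKT point.
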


\subsection{Discussion about Implementation}
Summarizing the above procedures, the proposed method is presented in Algorithm \ref{alg1}. Step 3 can be implemented using either analog or digital computers. As claimed in \cite{wu2025Arbitrarily}, solving the fixed-time-stable ODE \eqref{eqn_KKT_fixed-time-stable_ODE} using analog computers offers two key advantages: \textbf{i) ultra-low energy consumption}, and \textbf{ii) arbitrarily small execution time independent of the problem dimension}.

\begin{algorithm}[H]
    \caption{Solving Lasso \eqref{eqn_elastic_net} via fixed-time-stable ODE}\label{alg1}
    \textbf{Input}: Given $A,b,\tau,\rho$ and prescribed settling time $T_p$
    \vspace*{.1cm}\hrule\vspace*{.1cm}
    \begin{algorithmic}[1]
    \STATE Define $Q,q$ by \eqref{eqn_Q_q}, choose $k = \frac{\pi}{2T_p}$, and construct ODE \eqref{eqn_KKT_fixed-time-stable_ODE};
    \STATE Initialize $(z(0),w(0))>0$ such as $z(0)=\mathbf{1}_{2n_x},w(0)=\mathbf{1}_{2n_x}$;
    \STATE Map ODE \eqref{eqn_KKT_fixed-time-stable_ODE} into the physical system (analog computer) or simulate ODE \eqref{eqn_KKT_fixed-time-stable_ODE} via numerical integrators in digital computers;
    \STATE Collect the values of the ODE state at time $T_p$: $z(T_p)$
    \STATE Recover the solution of Lasso \eqref{eqn_elastic_net}: $x^*=z_{1:n_x}(T_p)-z_{n_x+1:2n_x}(T_p)$.
    \end{algorithmic}
\end{algorithm}

Despite the recent progress in analog hardware, including advances in reconfigurability and programmability \cite{achour2016configuration}, it remains relatively less mature than digital computing platforms. Consequently, this article validates the algorithm through simulations on digital computers, while experimental verification using analog hardware is currently underway. 

\begin{figure*}[!htbp]
\begin{picture}(140,200)
\put(0,0){\includegraphics[width=90mm]{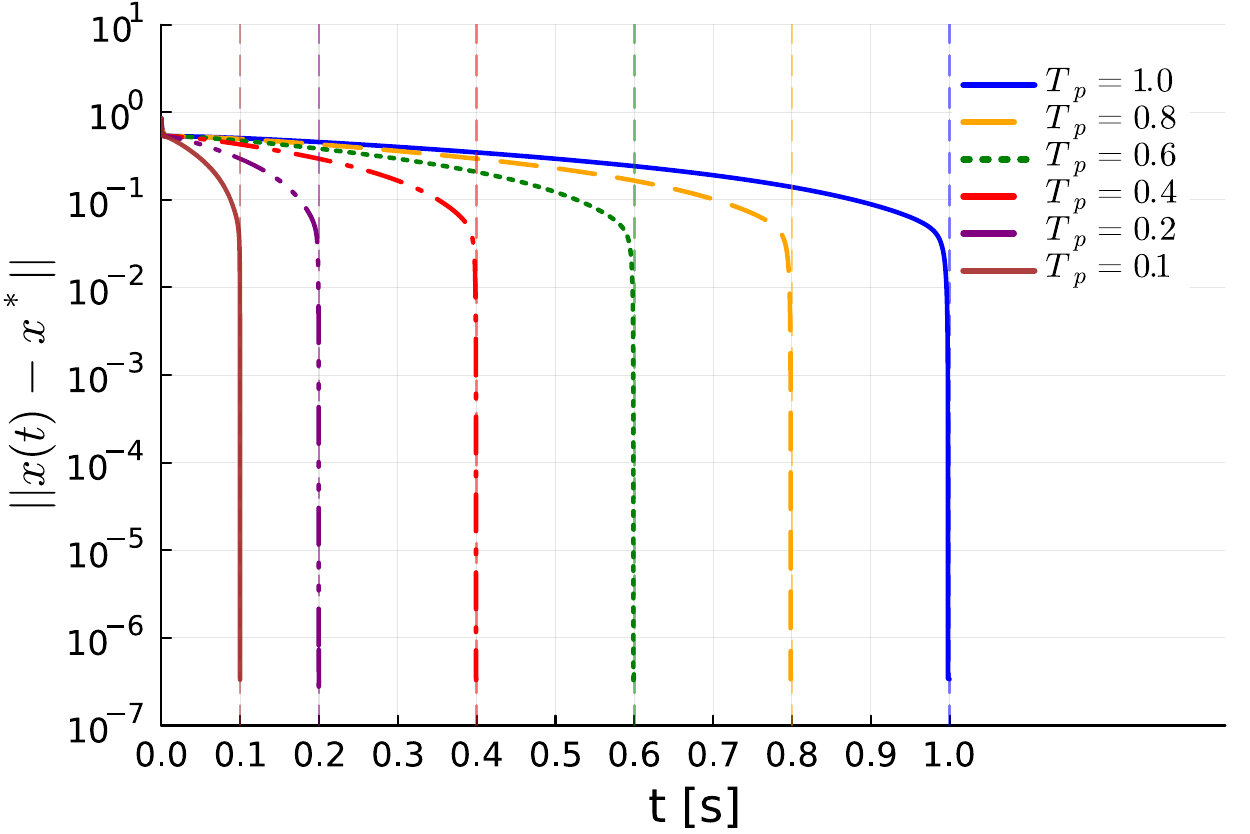}}
\put(260,0){\includegraphics[width=90mm]{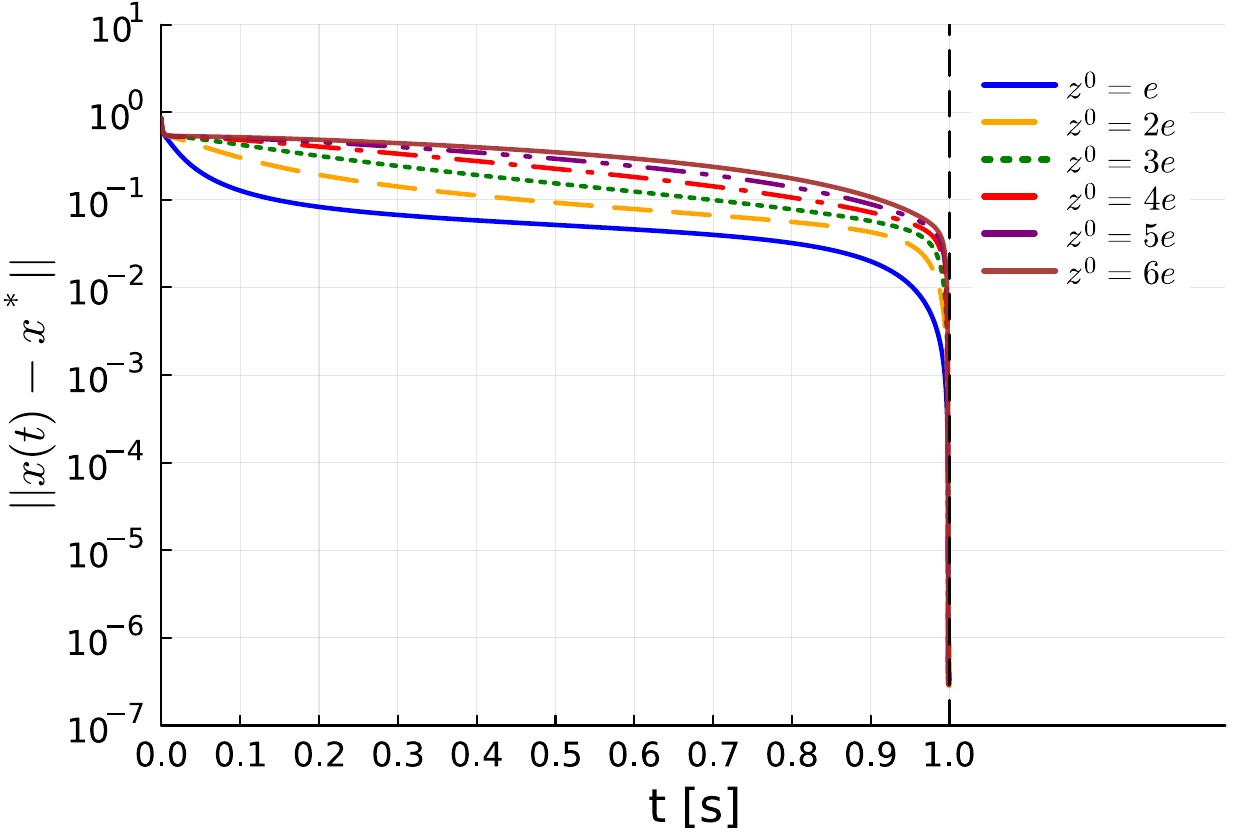}}
\end{picture}
\caption{Left: the trajectory $\|x(t)-x^*\|$ of Experiment (1) for one Lasso example; Right: the trajectory $\|x(t)-x^*\|$ of Experiment (2) for one Lasso example.}
\label{fig_result}
\end{figure*}

\section{Numerical Examples}
The main contribution of this article is Thm.\ \ref{thm_1}, which establishes how the settling time of the transformed ODE associated with the Lasso problem can be explicitly prescribed. This article is the first part of a series of studies, focusing on building the theoretical foundation and providing a ``soft-verification'' of the proposed approach. Specifically, ODE \eqref{eqn_KKT_fixed-time-stable_ODE} is numerically simulated to validate the correctness of Thm.\ \ref{thm_1}. The simulations are conducted using Julia’s \texttt{DFBDF()} ODE solver on a MacBook Pro with an Apple M4 Max processor and 36 GB RAM. The second part (the “hard verification”), which focuses on the analog hardware implementation and a tailored printed circuit board (PCB) for the Lasso problem, is currently in progress.

We generate 100 Lasso problems with random $A,b$ with $n_x=10$, $m=20$, $\tau=1.0$, and $\rho=0.1$. Two experiments are conducted. \textit{Experiment (1)} tests prescribed settling times $T_p=1,0.8,0.6,0.4,0.2,0.1$ by setting $k=\pi/(2T_p)$ with the same initial condition $(z(0),w(0))=e$ (the vector of ones). \textit{Experiment (2)} tests different initial conditions $(z(0),w(0))=ie$, $i=1,\dots,6$, with $T_p=1$. Figure \ref{fig_result} shows the results, confirming that the settling time can be arbitrarily prescribed and is independent of the initial condition, consistent with Theorem ~\ref{thm_1}.

\section{Conclusion}
This article developed a fixed-time-stable ODE representation for the Lasso problem. The approach relies on transforming the non-smooth Lasso formulation into a smooth nonnegative QP and constructing a Newton-based fixed-time-stable ODE for solving its KKT conditions. The resulting ODE automatically enforces nonnegativity constraints and guarantees convergence to the optimal solution within a prescribed time bound that is independent of the initial condition and problem data.  Numerical simulations confirm the theoretical settling-time property. 

Future work will focus on hardware verification using large-scale custom printed circuit boards, and the resulting Lasso circuits offer energy-efficient and scalable solutions. 
Since analog hardware suffers from device mismatch, finite precision, and noise, future work will also study fixed-time stability under such perturbations and develop Lyapunov-based noise-aware designs.

\bibliographystyle{IEEEtran}
\bibliography{Ref}
\end{document}